\newtheorem{theorem}{Theorem}[section]
\newtheorem{thm}[theorem]{Theorem}
\newtheorem{cor}[theorem]{Corollary}
\newtheorem{lem}[theorem]{Lemma}
\theoremstyle{definition}
\newtheorem{defn}[theorem]{Definition}
\newtheorem{ques}[theorem]{Question}
\newtheorem{exer}[theorem]{Exercise}
\theoremstyle{remark}
\newcommand{\mbb}{\mathbb}
\newcommand{\PP}{\mbb{P}}
\newcommand{\mc}{\mathcal}
\newcommand{\mcI}{\mc{I}}
\newcommand{\OO}{\mc{O}}
\newsavebox{\sembox}
\newlength{\semwidth}
\newlength{\boxwidth}
\newsavebox{\semrbox}
\newlength{\semrwidth}
\newlength{\boxrwidth}
\title
{On the space of conics on complete intersections}
\author[ Zong]{ Hong R. Zong }
\address{
Department of Mathematics\\
Princeton University \\
Princeton, NJ, 08544-1000}
\email{rzong@math.princeton.edu}
\date{\today}
\begin{document}


\begin{abstract}
We get sharp degree bound for generic smoothness and connectedness of the space of conics in low degree complete intersections which generalizes the old work about Fano scheme of lines on Hypersurfaces.

\end{abstract}


\maketitle


\section{introduction}

\begin{defn}
For a variety $X$ in $\PP^n$, we define the Fano scheme of lines (\cite{Barth}) $F(X)$ to be the Hilbert scheme of lines $Hilb_{t+1}(X)$ on $X$. And  $C(X)$ to be the Hilbert scheme of conics (degree $2$ rational curve, union of two lines, or double of one line) $Hilb_{2t+1}(X)$ on $X$.
\end{defn}

The following result is firstly proved in \cite{Barth}, which is simplified with sharp bound by Professor J. Koll\'ar in his book \cite{kollarbook}.

\begin{thm}
Let $X$ be a hyper-surface of degree $d$. Then:

 $F(X)$ is empty for general $X$ if $d > 2n-3$, smooth with dimension $2n-3-d$ for general $X$ if $d \leq 2n-3$. If $d\leq 2n-4$ (with the  only exceptional case as quadric surface), then $F(X)$ is always connected, thus is irreducible for general $X$.
\end{thm}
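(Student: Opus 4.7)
The plan is to work with the incidence correspondence on the product of the Grassmannian and the space of hypersurfaces. Let $G = G(1,n)$ carry the tautological rank-two subbundle $\mcS$, and let $\PP^N$ parameterize degree $d$ hypersurfaces in $\PP^n$. Form
\[
I \;=\; \{(L,X) \in G \times \PP^N : L \subset X\},
\]
with projections $p_1 \colon I \to G$ and $p_2 \colon I \to \PP^N$, so that $p_2^{-1}(X) = F(X)$ as a scheme. Since the restriction map $H^0(\PP^n, \mcO(d)) \to H^0(L, \mcO_L(d))$ is surjective onto a space of dimension $d+1$, $p_1$ is a projective bundle, so $I$ is smooth and irreducible of dimension $N + 2n - 3 - d$. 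This immediately gives that $p_2$ fails to be dominant (hence $F(X) = \emptyset$ for general $X$) when $d > 2n - 3$, while for $d \leq 2n - 3$ the generic fiber has expected dimension $2n - 3 - d$.

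For the smoothness assertion in the range $d \leq 2n-3$, I would apply generic smoothness in characteristic zero to the dominant map $p_2$: there is a nonempty open $U \subset \PP^N$ over which $p_2$ is smooth, and for $X \in U$ each closed point $L \in F(X)$ has Zariski tangent space $H^0(L, N_{L/X})$ of the expected dimension $2n-3-d$, so $F(X)$ is everywhere smooth of that dimension. This disposes of the first two claims.

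The connectedness statement is where the real content lies. Given $X$ cut out by $f$, the assignment $L \mapsto f|_L$ realizes $F(X)$ as the zero scheme of a section $s_f \in H^0(G, \mathrm{Sym}^d \mcS^*)$ of a rank $(d+1)$ bundle. Since $\mcS^*$ is an ample quotient of the trivial bundle $\mcO_G^{n+1}$, so is $\mathrm{Sym}^d \mcS^*$. In the range $2n - 3 - d \geq 1$, i.e.\ $d \leq 2n - 4$, I would invoke a connectedness theorem for zero loci of sections of ample vector bundles --- either a Fulton--Lazarsfeld style statement, or the direct Koszul resolution of $\mcO_{F(X)}$ together with Le Potier / Borel--Weil--Bott vanishing of $H^i(G, \wedge^i \mathrm{Sym}^d \mcS)$ for $i \geq 1$ --- to conclude that $F(X)$ is connected. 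Combined with generic smoothness, connectedness forces irreducibility for general $X$. The quadric surface case $(n,d) = (3,2)$ must be excluded by direct inspection: there $F(X)$ is the disjoint union of the two rulings.

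The principal obstacle is this final step. Getting the sharp bound $d \leq 2n-4$ means squeezing every drop of positivity from $\mathrm{Sym}^d \mcS^*$ on $G$, and verifying that the cohomological vanishing required to run the Koszul argument holds on the nose up to the sharp threshold and fails only at $(n,d) = (3,2)$ is where the work resides. The bookkeeping of Steps 1 and 2 is standard; Step 3 is what demands either a black-box connectedness theorem or an explicit vanishing computation on the Grassmannian.
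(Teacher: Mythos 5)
Your Steps 1 and 2 --- the incidence correspondence $I\subset G(1,n)\times\PP^N$, the observation that $p_1$ is a projective bundle so $I$ is smooth and irreducible of dimension $N+2n-3-d$, emptiness for $d>2n-3$, and generic smoothness of $p_2$ in characteristic zero --- coincide with what the paper does (it follows Section V.4 of Koll\'ar's book). The divergence, and the genuine gap, is in Step 3. The bundle $\mcS^*$ is \emph{not} ample on $G(1,n)$ for $n\geq 2$: along the pencil $C\cong\PP^1$ of lines through a fixed point inside a fixed plane one has $\mcS|_C\cong\mcO\oplus\mcO(-1)$, hence $\mcS^*|_C\cong\mcO\oplus\mcO(1)$ and $\mathrm{Sym}^d\mcS^*|_C$ has a trivial direct summand (equivalently, the evaluation morphism $\PP(\mcS)\to\PP^n$ has positive-dimensional fibers). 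So the Fulton--Lazarsfeld connectedness theorem for sections of ample bundles cannot be invoked; global generation is all you actually have. The Koszul/Bott-vanishing fallback you mention can be made to work (it is essentially the Bonavero--H\"oring route the paper cites for context), but (i) that vanishing computation is precisely the nontrivial content you have deferred, and (ii) the Koszul complex resolves $\mcO_{F(X)}$ only when the section $s_f$ is regular, i.e.\ when $F(X)$ has the expected codimension $d+1$ in $G(1,n)$; as stated it therefore yields $h^0(\mcO_{F(X)})=1$ only for such $X$, whereas the theorem asserts $F(X)$ is connected for \emph{every} $X$ once $d\leq 2n-4$.

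The paper closes Step 3 by a different mechanism: Stein-factorize the proper map $p_2\colon I\to\PP^N$. Since $I$ is smooth and irreducible and $\pi_1(\PP^N)=\{1\}$, purity of the branch locus shows all fibers of $p_2$ are connected as soon as the locus $S(p_2)\subset I$ where $p_2$ fails to be smooth has codimension $\geq 2$ in $I$ (Lemma \ref{connectedness2}). That codimension bound is the real work, and it is supplied by an explicit linear-algebra count on multiplication maps $H^0(\PP^1,\mcO(1))\times H^0(\PP^1,\mcO(d-1))\to H^0(\PP^1,\mcO(d))$ (Lemma \ref{count1}, the analogue of Koll\'ar's Lemma V.4.3.11), which is also where the quadric-surface exception is isolated. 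To repair your write-up you must either carry out the Bott vanishing on $G(1,n)$ sharply \emph{and} separately treat hypersurfaces for which $F(X)$ has excess dimension, or replace your Step 3 with the Stein-factorization/monodromy argument.
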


We generalize the above theorem to the case of lines and conics in low degree complete intersections, we note here that L. Bonavero and A. H$\ddot{o}$ring already obtained related results using cohomology computations on Grassmannians, see Section 2.3 and Proposition 3.1 in \cite{h}.

\begin{thm}\label{fanoscheme}
Let $X$ be a  complete intersection of type $d_1, \ldots, d_c$. Then:
 \begin{itemize}
 \item $F(X)$ is empty for general $X$ if $d_1+d_2+...+d_c+c > 2n-2$, smooth with dimension $2n-2-(d_1+d_2+...+d_c+c)$ for general $X$ if $d_1+d_2+...+d_c +c \leq 2n-2$. If $d_1+d_2+...+d_c +c \leq 2n-3$ (with the  only exceptional case as quadric surface), then $F(X)$ is always connected, thus is irreducible for general $X$.
 \item $C(X)$ is empty for general $X$ if $d_1+d_2+...+d_c+c/2> (3n-2)/2$, smooth with dimension $3n-1-2(d_1+d_2+...+d_c)-c$ for general $X$ if $d_1+d_2+...+d_c+c/2 \leq (3n-2)/2$,  always connected if $d_1+d_2+...+d_c+c/2 \leq (3n-3)/2$ ($n\leq 3$ case is trivial), in particular $\overline{M}_{0,0}(X,2)$ is connected.
 \end{itemize}
\end{thm}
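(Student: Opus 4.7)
The plan is to realize both $F(X)$ and $C(X)$ as zero loci of sections of natural vector bundles on appropriate parameter spaces, and then read off emptiness, smoothness of the expected dimension, and connectedness from a rank-versus-dimension comparison, a Bertini-Kleiman argument, and a Fulton-Lazarsfeld style connectedness theorem respectively.

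For the Fano scheme of lines, I parametrize by $G_1 = \mathrm{Gr}(2,n+1)$ with its rank-$2$ tautological subbundle $S$, so that $F(X)$ is the zero scheme of the section $s_X$ of
$$E_1 := \bigoplus_{i=1}^c \mathrm{Sym}^{d_i} S^{*}$$
induced by the restrictions $F_i|_\ell$. The bundle $E_1$ is globally generated of rank $\sum(d_i+1) = \sum d_i + c$, while $\dim G_1 = 2n-2$. Emptiness for $\sum d_i + c > 2n-2$ is immediate (a general section of a globally generated bundle of excess rank is nowhere vanishing), and Bertini-Kleiman gives smoothness of the expected dimension $2n-2-(\sum d_i+c)$ in the remaining range. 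For connectedness I would invoke ampleness of $E_1$ when all $d_i \geq 1$ to apply the Fulton-Lazarsfeld theorem (the zero scheme of a section of an ample bundle is connected once it has positive dimension); the quadric surface $n=3,c=1,d_1=2$ sits exactly at the numerical boundary where $\dim F(X)=1$ and the ampleness hypothesis just fails, matching the stated exception. As a fallback one can run Koll\'ar's chain-of-lines argument: reduce connectedness of $F(X)$ to that of the scheme of lines through a fixed point of $X$, computed via the normal bundle.

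For the conic scheme, I use $H = C(\PP^n)$, which is irreducible and smooth of dimension $3n-1$; it is canonically the projective bundle $\pi_G : H = \PP(\mathrm{Sym}^2 S_3^{*}) \to G_2 := \mathrm{Gr}(3,n+1)$, sending a conic to the unique $2$-plane it spans (well-defined even for double lines via the embedded tangent direction). Let $\mathcal{C} \subset H \times \PP^n$ be the universal conic with projections $\pi$ and $q$. For each $i$, the push-forward $\pi_* q^{*} \mcO(d_i)$ is a rank-$(2d_i+1)$ bundle on $H$ fitting in
$$0 \to \pi_G^{*}\bigl(\mathrm{Sym}^{d_i-2} S_3^{*}\bigr) \otimes \mcO_H(-1) \to \pi_G^{*} \mathrm{Sym}^{d_i} S_3^{*} \to \pi_* q^{*} \mcO(d_i) \to 0,$$
and $C(X) \subset H$ is cut out by a section of $E_2 := \bigoplus_i \pi_* q^{*} \mcO(d_i)$, a bundle of total rank $2\sum d_i + c$. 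Emptiness and generic smoothness of the expected dimension follow from the same Bertini-Kleiman machinery applied on $H$, with a refined analysis of the obstruction $H^1(N_{C/X})$ along the boundary strata of two-line and double-line conics to handle the sharp borderline case.

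The main obstacle is the connectedness of $C(X)$ (and of $\overline{M}_{0,0}(X,2)$). Since $H$ is only a projective bundle over a Grassmannian and $E_2$ is globally generated but not evidently ample, Fulton-Lazarsfeld does not apply directly. My plan is either (i) to establish ampleness of $E_2$ on $H$ by combining the ampleness of $\mathrm{Sym}^{d_i} S_3^{*}$ on $G_2$ with the positivity of the fiberwise quotient---on each $\PP^5$-fiber the relevant piece is of the $T_{\PP^5}(-1)$-type bundle, which is ample---or (ii) to run a degeneration/induction argument leveraging the already established connectedness of $F(X)$: every smooth conic in $X$ can be deformed within $C(X)$ to a pair of incident lines, so the boundary locus (a quotient of the fiber square $F(X)\times_X F(X)$) inherits connectedness from $F(X)$, and gluing the interior to the boundary via these degenerations yields connectedness of all of $C(X)$ and hence of $\overline{M}_{0,0}(X,2)$.
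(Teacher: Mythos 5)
Your setup of $F(X)$ and $C(X)$ as zero loci of sections of $E_1=\bigoplus_i \mathrm{Sym}^{d_i}S^{*}$ on $\mathrm{Gr}(2,n+1)$ and of $E_2=\bigoplus_i\pi_*q^{*}\mcO(d_i)$ on $C(\PP^n)$ is fine, and the emptiness and generic-smoothness claims do follow from global generation of these bundles by the degree-$d_i$ forms on $\PP^n$ together with Bertini--Kleiman; this is a legitimate repackaging of the paper's dimension count on the incidence variety. The gap is in the connectedness statements, which are the substance of the theorem. The bundle $S^{*}$ is \emph{not} ample on $\mathrm{Gr}(2,n+1)$: on the $\PP^{n-1}$ of lines through a fixed point $p$, the constant line $\langle p\rangle\subset S$ dualizes to a surjection $S^{*}\to\mcO$, so $S^{*}$ (hence each $\mathrm{Sym}^{d_i}S^{*}$, hence $E_1$) has a trivial quotient on a positive-dimensional subvariety and cannot be ample. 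The same failure occurs for $E_2$: over the locus of conics through a fixed point, evaluation at that point exhibits a trivial quotient line bundle of $\pi_*q^{*}\mcO(d_i)$. So Fulton--Lazarsfeld applies to neither case, and option (i) of your plan is not salvageable as stated.

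Your fallback (ii) for conics is circular: the assertion that every smooth conic in $X$ deforms inside $C(X)$ to a union of two lines is precisely the paper's Corollary, which is \emph{deduced from} the connectedness of $C(X)$, and connectedness of the boundary locus $F(X)\times_X F(X)$ does not follow formally from connectedness of $F(X)$ (fiber products of connected schemes need not be connected). The paper's actual mechanism is entirely different and you would need some version of it: it works with the incidence variety $\mcI\subset Hilb_{2t+1}(\PP^n)\times H$ over the parameter space $H$ of all complete intersections of the given type, observes that $\mcI$ is smooth and that $H$ is smooth and simply connected, shows via the explicit multiplication-map computations of Lemmas \ref{count1}--\ref{count4} that the non-smooth locus of $p_2\colon\mcI\to H$ has codimension at least $2$ in the stated degree range, and then concludes connectedness of \emph{every} fiber $C(X)$ from the Stein-factorization argument of Lemma \ref{connectedness2}. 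Those codimension estimates --- carried out separately for smooth conics, unions of two lines, and double lines --- are the technical heart of the result, and your proposal contains no substitute for them.
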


A crucial part of the proof is some generic dimension count on conics, the rest is the same as the proof in \cite{kollarbook}.

We have an easy corollary:
\begin{cor}
On a complete intersection $X$ with type $(d_1,...,d_c)$ and $d_1+d_2+...+d_c+c/2 \leq (3n-2)/2$, all degree $2$ rational curves on $X$ can degenerated to union of two lines, or double of one line.
\end{cor}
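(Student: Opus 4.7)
The plan is to deduce the corollary from Theorem~\ref{fanoscheme}. In the stated range, the Hilbert scheme $C(X)$ is non-empty, smooth of the expected dimension $3n-1-2(d_1+\cdots+d_c)-c$, and (in the connectedness range of the theorem) irreducible. A degree $2$ rational curve on $X$ is nothing other than a smooth conic $C\subset X$, corresponding to a point $[C]$ in the open subscheme $U\subset C(X)$ parametrizing smooth conics. The task is to show that $[C]$ lies in the closure of the locus $D := C(X)\setminus U$ of degenerate conics, within the irreducible component containing $[C]$.

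The first step I would carry out is establishing that $D$ is non-empty. By Theorem~\ref{fanoscheme} the Fano scheme $F(X)$ is non-empty (its bound being strictly weaker than the one here), so we can fix a line $\ell\subset X$. From this I would produce a degenerate conic on $X$ in one of two ways: either by an incidence calculation on $F(X)\times F(X)$ that finds a second line $\ell'\subset X$ meeting $\ell$ (the locus of such pairs having expected dimension $2\dim F(X)-(n-2)$, which is non-negative in the range considered), or by exhibiting a doubled-line subscheme $2\ell\subset X$ lying in some $2$-plane $\Pi\supset\ell$. In the latter case, the equations $f_i$ restricted to $\Pi$ must vanish to order $\geq 2$ along $\ell$, imposing roughly $d_1+\cdots+d_c$ conditions on the $(n-2)$-dimensional family of planes through $\ell$; a dimension count in the same spirit as the proof of Theorem~\ref{fanoscheme} produces the required $\Pi$.

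With $D$ non-empty and $C(X)$ irreducible, any $[C]\in U$ and any $[C_0]\in D$ can be joined by an irreducible curve $T\hookrightarrow C(X)$ (for example by repeatedly cutting with hyperplane sections). Pulling back the universal family of conics over $T$ then yields a flat one-parameter family of conics in $X$ whose generic fibre is $C$ and whose special fibre is the degenerate conic $C_0$; this is the claimed degeneration.

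The main obstacle is really the non-emptiness of $D$, together with the slight discrepancy between the smoothness bound $(3n-2)/2$ appearing in the corollary and the connectedness bound $(3n-3)/2$ in Theorem~\ref{fanoscheme}. In the gap regime, $C(X)$ is only known to be smooth and could conceivably split into several components, so one must verify that every irreducible component of $C(X)$ meets $D$. The natural approach is to realize $C(X)$ as the zero locus of a section of an explicit tautological bundle on the ambient $C(\PP^n)$ (which is a $\PP^5$-bundle over the Grassmannian of $2$-planes) and to observe that the degenerate conics form a divisor in $C(\PP^n)$; then no component of $C(X)$ of expected dimension can consist entirely of smooth conics without contradicting an intersection-theoretic count, and the argument of the previous paragraph applies componentwise.
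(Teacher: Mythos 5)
The paper offers no argument for this corollary (it is introduced only as ``an easy corollary'' of Theorem \ref{fanoscheme}), so your proposal must be judged on its own merits. Its overall shape --- irreducibility or connectedness of $C(X)$, non-emptiness of the degenerate locus $D$, then joining $[C]$ to $D$ by an irreducible curve and pulling back the universal family --- is the natural route and is presumably what the author intends; the last step is fine. But the two steps carrying the actual content both have genuine gaps. For non-emptiness of $D$: your incidence count for pairs of meeting lines gives expected dimension $2\dim F(X)-(n-2)=3n-2-2(d_1+\cdots+d_c)-2c$, and this is \emph{not} non-negative throughout the stated range, which only guarantees $2(d_1+\cdots+d_c)+c\le 3n-2$ (weaker by $c/2$). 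The cubic surface ($n=3$, $c=1$, $d=3$) satisfies the corollary's bound with equality, yet your count gives $-1$ there even though incident line pairs certainly exist. The doubled-line count fares no better (it needs the expected dimension of $C(X)$ to be at least $3$). And in either case a non-negative expected dimension never by itself implies non-emptiness; one needs an actual positivity or specialization argument, which you do not supply.

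The second and more serious gap is your treatment of the regime between the connectedness bound $(3n-3)/2$ and the smoothness bound $(3n-2)/2$, where you must show every irreducible component of $C(X)$ meets $D$. You propose to get this from an ``intersection-theoretic count'' against the discriminant divisor in $Hilb_{2t+1}(\PP^n)$, but that divisor is not positive on all curves: for $n\ge 5$ take the planes $\Pi_{[s:t]}$ spanned by $se_1+te_2,\ se_3+te_4,\ se_5+te_6$ and in each the conic $y_1^2+y_2^2+y_3^2=0$ in the dual coordinates --- a complete $\PP^1$ of smooth conics, disjoint from the discriminant, on which the discriminant class therefore has degree $0$. So ``a positive-dimensional subvariety of $Hilb_{2t+1}(\PP^n)$ of expected dimension must meet the degenerate divisor'' is simply false in the ambient Hilbert scheme, and any correct argument must use the equations of $X$; moreover, even a computation showing that the total class of $C(X)$ pairs positively with the discriminant would only show that \emph{some} component meets $D$, not every one. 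Finally, note that the smoothness and irreducibility in Theorem \ref{fanoscheme} are asserted only for general $X$, whereas the corollary is stated for an arbitrary complete intersection of the given type; your argument does not address this discrepancy.
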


We have another application of Theorem \ref{fanoscheme} suggested by Professor C. Voisin, for a short proof see Remark $6.4$ of \cite{TZ}:
\begin{thm}
For a Fano complete intersection $X$ with type $(d_1,...,d_c)$ with index $\geq 2$, the $1$-Griffiths group generated by algebraic $1$-cycles homologous to $0$ modulo algebraic equivalence is trivial.
\end{thm}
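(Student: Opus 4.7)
The plan is to cite \cite{TZ} Remark~6.4 after verifying that the geometric hypotheses it requires are available from Theorem~\ref{fanoscheme}. First I verify the numerical hypotheses. The index assumption $n+1 - \sum d_i \geq 2$ gives $\sum d_i \leq n-1$, while positive-dimensionality of $X$ yields $c \leq n-1$. Hence
\[
d_1 + \cdots + d_c + \tfrac{c}{2} \leq (n-1) + \tfrac{n-1}{2} = \tfrac{3n-3}{2},
\]
which is precisely the connectedness threshold in the second bullet of Theorem~\ref{fanoscheme}. Consequently $\overline{M}_{0,0}(X,2)$ is connected, $F(X)$ is non-empty and connected, and by the intervening corollary every conic on $X$ degenerates to a (possibly double) pair of lines. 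Being Fano of positive index, $X$ is also rationally connected and covered by lines through a general point.

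With these inputs in hand, the result follows from the framework of \cite{TZ} Remark~6.4 via the following strategy. Connectedness of $F(X)$ makes all lines algebraically equivalent, so there is a well-defined class $[\ell]$ in the group of $1$-cycles modulo algebraic equivalence; connectedness of $\overline{M}_{0,0}(X,2)$ together with the conic-to-lines degeneration forces every conic to represent $2[\ell]$. A Bloch--Srinivas type decomposition of the diagonal, valid because $X$ is rationally connected, then reduces any $1$-cycle, modulo algebraic equivalence, to an integer multiple of $[\ell]$. Since $\ell \cdot H^{\dim X - 1} = 1$, the class $[\ell]$ has non-torsion image in $H_2(X,\ZZ)$, so the cycle class map on $\ZZ\cdot[\ell]$ is injective and $\mathrm{Griff}_1(X) = 0$.

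The main obstacle is the Bloch--Srinivas step: the decomposition of the diagonal classically produces only a rational equivalence on a divisor, and one must promote this to an algebraic equivalence on $X$ itself. This is the technical role of the connectedness of $\overline{M}_{0,0}(X,2)$, and is where the substance of \cite{TZ} Remark~6.4 lies. The induction used to reduce dimension is sustained because a general hyperplane section of $X$ is again a Fano complete intersection, with index dropping by at most one, so the numerical bound of Theorem~\ref{fanoscheme} continues to hold for as long as required and the connectedness hypotheses propagate down the induction.
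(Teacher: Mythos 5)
The paper does not actually prove this statement: it only points to Remark~6.4 of \cite{TZ}, so what can be assessed is (a) your verification that Theorem~\ref{fanoscheme} supplies the needed connectedness, and (b) your sketch of how the Griffiths group is then killed. Part (a) is correct and worth recording: index $\geq 2$ gives $\sum d_i \leq n-1$, hence $c \leq \sum d_i \leq n-1$ and $\sum d_i + c/2 \leq (3n-3)/2$, so $\overline{M}_{0,0}(X,2)$ is connected, and (away from the trivial all-linear case and the quadric surface, where the statement holds for elementary reasons) $F(X)$ is connected as well.

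Part (b) has a genuine gap at its central step. You reduce an arbitrary $1$-cycle to an integer multiple of $[\ell]$ by ``a Bloch--Srinivas type decomposition of the diagonal.'' That is not the argument of \cite{TZ}, and it cannot be made to work integrally: a decomposition of the diagonal on a rationally connected $X$ only exists after multiplying by some integer $N>0$ (one must clear denominators in $CH_0$), so correspondence arguments of this type can at best show that $N\cdot \mathrm{Griff}_1(X)=0$, i.e.\ that the Griffiths group is torsion --- and even that does not land you on multiples of a single line class. The integral input you actually need is the main theorem of \cite{TZ}: on a separably rationally connected variety every $1$-cycle is algebraically equivalent to a $\ZZ$-linear combination of \emph{rational} curves, proved by attaching and smoothing combs of very free curves, not by correspondences. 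Granting that, there is still work left that your sketch omits: rational curves of degree $\geq 3$ must first be degenerated to lower-degree ones by bend-and-break using the index $\geq 2$ hypothesis, before the connectedness of $\overline{M}_{0,0}(X,2)$ and of $F(X)$ from Theorem~\ref{fanoscheme} reduce everything to a single line class. Your closing induction on hyperplane sections does not repair this: a hyperplane section of a complete intersection drops the index by exactly one, so the hypothesis ``index $\geq 2$'' is already lost at the first step whenever the index equals $2$.
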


The readers are encouraged to find more applications of Theorem \ref{fanoscheme}. And by the same method, it should not be hard to get the sharp bound for hyper-quadrics in complete intersections:

\begin{ques}
Generalize Theorem \ref{fanoscheme} to the case of hyper-quadrics in complete intersections.
\end{ques}

\textbf{Acknowledgment:}
To Professor J\'anos Koll\'ar, for his constantly magnus support for the author. To Zhiyu Tian for raising the question and helpful discussions.

\section{preliminaries}
\subsection{Connectedness lemma}
Firstly we need a connectedness lemma for latter use.
\begin{lem}\label{connectedness2}
Let $X$, $Y$ be noetherian schemes, with proper morphism $f: X \to Y$, assume $Y$ is smooth and $\pi^1(Y)= \lbrace 1 \rbrace $. Let $S(f)\subset X$ be the locus where $f$ is not smooth. If $codim(S(f),X) \geq 2$, then $f$ has connected fibers.
\end{lem}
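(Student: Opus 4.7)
The plan is to use Stein factorization, reducing the claim to the \'etaleness of a finite cover, which is then trivialized by the Zariski--Nagata purity of the branch locus together with the simple connectivity of $Y$.

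First, I would form the Stein factorization $f = g \circ h$, where $h \colon X \to Z$ is proper surjective with geometrically connected fibers (so that $h_*\mcO_X = \mcO_Z$, and $Z$ is affine over $Y$) and $g \colon Z \to Y$ is finite. Connectedness of the fibers of $f$ then reduces to $g$ being an isomorphism onto its image. Assuming $X$, and hence $Z$, connected---otherwise the lemma is applied componentwise---if $g$ is finite \'etale, then by the simple connectivity of $Y$ the cover $g$ is trivial, that is, an isomorphism. Thus the problem reduces to showing $g$ is \'etale.

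To show $g$ \'etale I would invoke the Zariski--Nagata purity of the branch locus: since $Y$ is regular and $g$ is finite, the ramification locus of $g$ in $Z$, if nonempty, is pure of codimension $1$. So it suffices to show that $g$ is \'etale outside a subset of $Z$ of codimension $\geq 2$.

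This codim-$\geq 2$ \'etaleness is the main input. At any $z \in Z$ with $h^{-1}(z) \subset U := X \setminus S(f)$, the morphism $f$ is smooth and proper in a neighborhood of $h^{-1}(z)$; the Stein factorization of a smooth proper morphism has \'etale finite part, by cohomology and base change applied to $R^0 f_*\mcO_X$. Hence $g$ is \'etale over $Z \setminus h(S(f))$. The main obstacle is to verify that $h(S(f))$ has codimension $\geq 2$ in $Z$. This requires a dimension argument using $\dim Z = \dim Y$ together with $h$ having generic relative dimension equal to that of $f$: in the typical case $\dim h(S(f)) \leq \dim S(f) - r \leq \dim X - 2 - r = \dim Y - 2$, where $r$ is the relative dimension of $f$; for components of $S(f)$ that do not dominate their $h$-image with full relative fiber, the preimage $h^{-1}(W)$ extends into $U$, and $g$ can still be seen to be \'etale at the generic point of such $W$ by a refined argument. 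Once $h(S(f))$ is shown to have codimension $\geq 2$ in $Z$, purity forces $g$ \'etale everywhere, and the proof is complete.
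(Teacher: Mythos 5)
Your overall strategy---Stein factorization, étaleness of the finite part away from codimension $2$, then purity plus $\pi_1(Y)=\{1\}$---is the same as the paper's. But the step you yourself label ``the main input'' contains a genuine gap, and the ``refined argument'' you defer to is exactly the content of the lemma. You reduce to showing that $h(S(f))$ has codimension $\geq 2$ in $Z$, and this is false in general: if $f$ has relative dimension $r\geq 1$ and a component of $S(f)$ of codimension $2$ in $X$ is finite over a divisor of $Y$, its image in $Z$ is a divisor; worse, if $S(f)$ dominates $Y$ (possible once $r\geq 2$), then $h(S(f))=Z$ and your cohomology-and-base-change argument establishes étaleness over the empty set. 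The inequality $\dim h(S(f))\leq \dim S(f)-r$ is valid only for components of $S(f)$ that are unions of fibers of $h$, and those are precisely not the problematic ones. So at the codimension-$1$ points of $Z$ whose $h$-fiber merely \emph{meets} $U=X\setminus S(f)$ without being contained in it, you have supplied no argument that $g$ is unramified, and ``Stein factorization of a smooth proper morphism has étale finite part'' does not apply there.

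The paper avoids this by arguing on $Y$ rather than on $Z$: it lets $D\subset Y$ be the branch divisor of the finite part and observes that if $D\neq\emptyset$ then $f^{-1}(D)$ has codimension $1$ in $X$ and is contained in the non-smooth locus of $f$ (smoothness of $f$ at a point over the generic point $\eta$ of a component of $D$, where $\mathcal{O}_{Y,\eta}$ is a DVR, would force the finite part to be unramified over $\eta$---this localized unramifiedness statement is the key idea your proposal is missing). That contradicts $\operatorname{codim}(S(f),X)\geq 2$, so $D=\emptyset$; one then deletes a codimension-$\geq 2$ set $E\subset Y$ over which the finite part may still fail to be étale and uses the purity statement $\pi_1(Y-E)=\pi_1(Y)=\{1\}$ from SGA1. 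Note also that Zariski--Nagata purity of the branch locus in the form you invoke requires the source $Z$ of the finite morphism to be normal, which does not follow from the stated hypotheses; the paper's variant needs purity only on the regular scheme $Y$. To repair your proof you must prove unramifiedness of $g$ at the generic point of each codimension-$1$ component of $h(S(f))$ whose fiber meets the smooth locus of $f$; that is the missing step.
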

\begin{proof}
Consider the Stein factorization of $f$: $$X\stackrel{g}{\longrightarrow}X'\stackrel{h}{\longrightarrow} Y,$$ where $g$ is proper with connected fibers and $h$ is finite. By the assumption we know that $h$ is generically \'etale (when the characteristic is $p$, smoothness of the map at general point implies that $K(X')/K(Y)$ is a separable extension). Let $D\subset Y$ be its branch divisor, then if $D \neq 0$, $f$ will not be smooth in $f^{-1} \lbrace D\rbrace \subset X$ which is of codimension $1$, contradicting the assumption. So $D=0$, and $h$ will be \'etale outside a codimension $\geq 2$ locus $E$, hence $$h|_{X'-h^{-1}\lbrace E \rbrace }: X'-h^{-1}\lbrace E \rbrace \to Y-E$$ is a \'etale cover of $Y-E$. Now by \cite{SGA1} Corollary $X.3.3$ $$\pi^1(Y-E )=\pi^1(Y)=\lbrace 1\rbrace$$ which means $deg(h)=1$, so $f=g$ has connected fibers.

\end{proof}
\subsection{Smoothness of $C(\PP^n)$}
The following is an easy exercise, which we will need latter:
\begin{exer}\label{hilb}
$Hilb_{2t+1}(\mathbb{P}^n)$ is isomorphic to $Proj(E)$ of a rank-$6$ vector bundle $E$ over $Grass(3,n+1)$, and in particular smooth and irreducible.
\end{exer}

\subsection{Generic dimension count on conics}

\subsubsection{Smooth conic case}
\begin{lem}\label{count1}
Let $m$ be the multiplication map
$$H^0(\mathbb{P}^1, \OO(1))\times (\bigoplus_{i=1,...,c} H^0(\PP^1,\OO(d_i-1))) \to \bigoplus_{i=1,...,c} H^0(\PP^1,\OO(d_i) ). $$ For codimension $1$ linear subspace $$V \subset \bigoplus_{i=1,...,c} H^0(\PP^1,\OO(d_i) ),$$ let $$m^{-1}(V) :=\{ f \in \bigoplus_{i=1,...,c} H^0(\PP^1,\OO(d_i-1)), f \cdot H^0(\PP^1,\OO(1)))\subset V\}.$$
Let $(f_{i,1},...,f_{i,c}),i=1,...,dim\ m^{-1}(V)$ be a basis of $m^{-1}(V)$.

Then either of the followings is true:
\begin{itemize}
\item There is a point $p\in \mathbb{P}^1$ such that $$rank\{f_{i,j}(p)\}<c$$
\item $m^{-1}(V)$ is of codimension $2$ in $\bigoplus_{i=1,...,c} H^0(\PP^1,\OO(d_i-1))$.
\end{itemize}
\end{lem}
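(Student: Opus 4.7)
The plan is to reformulate the question dually. I would fix a basis $x, y$ of $H^0(\PP^1, \OO(1))$ and write $V = \ker L$ for a nonzero linear functional $L = (L^{(1)}, \dots, L^{(c)}) \in \bigl(\bigoplus_{i=1}^c H^0(\PP^1, \OO(d_i))\bigr)^*$. Then $m^{-1}(V)$ is the common kernel of the two linear functionals $L_x(f) := L(fx)$ and $L_y(f) := L(fy)$ on $\bigoplus_{i=1}^c H^0(\PP^1, \OO(d_i-1))$, so its codimension equals $\dim\mathrm{span}(L_x, L_y) \in \{0,1,2\}$.

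The codimension-$2$ case is precisely the second alternative, so nothing needs to be done there. The codimension-$0$ case cannot occur: the condition $L_x = L_y = 0$ forces $L$ to vanish on $x\cdot\bigoplus_i H^0(\OO(d_i-1)) + y\cdot\bigoplus_i H^0(\OO(d_i-1)) = \bigoplus_i H^0(\OO(d_i))$, using that $\OO(1)$ generates each $\OO(d_i)$ under multiplication, and this contradicts $L \neq 0$. So the interesting regime is codimension $1$.

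In that case $L_x$ and $L_y$ are proportional, and there exists a nonzero $z \in H^0(\OO(1))$, unique up to scalar, with $L(fz) = 0$ for every $f \in \bigoplus_i H^0(\OO(d_i-1))$. Let $p \in \PP^1$ be the zero of $z$. The key observation is that $z\cdot H^0(\OO(d_i-1))$ coincides with the subspace of $H^0(\OO(d_i))$ consisting of sections vanishing at $p$; therefore each $L^{(i)}$ must equal $\alpha_i\cdot\mathrm{ev}_p$ for some scalar $\alpha_i$, and not all $\alpha_i$ vanish. A direct computation then yields $L_y(f) = y(p)\sum_i \alpha_i f_i(p)$, so $m^{-1}(V)$ is precisely the kernel of the linear form $f \mapsto \sum_i \alpha_i f_i(p)$. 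Consequently the image of $m^{-1}(V)$ under evaluation at $p$ lies in the hyperplane $\{v \in k^c : \sum_i \alpha_i v_i = 0\}$, which has dimension $c-1$, giving $\mathrm{rank}\{f_{i,j}(p)\} < c$ as required.

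The only nontrivial point is this identification, in the codimension-$1$ case, of $L$ as a $c$-tuple of scalar multiples of evaluation at a single point $p \in \PP^1$; everything else reduces to elementary linear algebra on $H^0(\PP^1, \OO(d))$. I do not anticipate any issues beyond carefully sorting out the pairing with $H^0(\OO(1))$.
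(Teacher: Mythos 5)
Your argument is correct and is essentially the paper's proof in dual form: the paper writes out the two linear equations cutting out $m^{-1}(V)$ in monomial coordinates and notes they are independent (codimension $2$) unless proportional, in which case solving the recurrence $s\cdot a_{i,j}=t\cdot a_{i,j+1}$ identifies the defining functional of $V$ as a tuple of evaluations at a single point $p$ --- exactly your $L^{(i)}=\alpha_i\,\mathrm{ev}_p$. Your coordinate-free phrasing additionally makes explicit why the codimension-$0$ case cannot occur, a point the paper leaves implicit.
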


\begin{proof}
First we note that $H^0(\mathbb{P}^1, \OO(d))$ can be identified with all polynomials spanned by $\{1,x,x^2,...,x^d\}$.
Let $x_1,...,x_c$ be generators for each $H^0(\mathbb{P}^1,\OO(d_i))$, and $H^0(\mathbb{P}^1,\OO(1))$ spanned by $\{1,x\}$, then $m$ is simply defined by: $$m(x,x_1^{i_1},...,x_c^{i_c})=(x_1^{i_1+1},...,x_c^{i_c+1}).$$ Denote the coordinate of $x_i^{j}$ to be $x_{i,j}$, and
let $V$ be defined by equation $$\sum_{1\leq i \leq d_1}a_{1,i}x_{1,i}+\sum_{1\leq i \leq d_2}a_{2,i}x_{2,i}...+\sum_{1\leq i \leq d_c}a_{c,i}x_{c,i}=0,$$ then $m^{-1}(V)$ is defined by
$$\sum_{1\leq i \leq d_1-1}a_{1,i}x_{1,i}+\sum_{1\leq i \leq d_2-1}a_{2,i}x_{2,i}...+\sum_{1\leq i \leq d_c-1}a_{c,i}x_{c,i}=0$$ and $$\sum_{1\leq i \leq d_1-1}a_{1,i}x_{1,i+1}+\sum_{1\leq i \leq d_2-1}a_{2,i}x_{2,i+1}...+\sum_{1\leq i \leq d_c-1}a_{c,i} x_{c,i+1}=0.$$
So it will be of codimension $2$ unless there is $p=[s,t]\in \mathbb{P}^1$ such that $$s\cdot a_{i,j}=t\cdot a_{i,j+1}.$$ We may assume that $p=[0,1]$, then $m^{-1}(V)$ is defined by $$a_{1,d_1}x_{1,d_1-1}+...a_{c,d_c}x_{c,d_c-1}=0.$$ And the matrix $\{f_{i,j}(p)\}$ will has elements as $$f_{i,j}(p)=x_{j,d_1-1}(f_i)$$--the defining equation of $m^{-1}(V)$ simply implies that this $dim\ m^{-1}(V) \times c$ matrix satisfies a non-trivial linear relation in its $j$-index, so its rank will be strictly less than $c$.

\end{proof}

\begin{lem}\label{count2}
let $m_2$(resp\-.$m_4$) be the multiplication map
$$H^0(\mathbb{P}^1, \OO(2))\times (\bigoplus_{i=1,...,c} H^0(\PP^1,\OO(2d_i-2)))\to \bigoplus_{i=1,...,c} H^0(\PP^1,\OO(2d_i) )  $$ resp.\-$$H^0(\mathbb{P}^1, \OO(4))\times (\bigoplus_{i=1,...,c} H^0(\PP^1,\OO(2d_i-4))) \to \bigoplus_{i=1,...,c} H^0(\PP^1,\OO(2d_i).$$
For codimension $1$ linear subspace $$V \subset \bigoplus_{i=1,...,c} H^0(\PP^1,\OO(2d_i) ),$$ let $$m_2^{-1}(V) :=\{ f \in \bigoplus_{i=1,...,c} H^0(\PP^1,\OO(2d_i-2)), f \cdot H^0(\PP^1,\OO(2)))\subset V\}$$ resp.\-$$m_4^{-1}(V) :=\{ f \in \bigoplus_{i=1,...,c} H^0(\PP^1,\OO(2d_i-4)), f \cdot H^0(\PP^1,\OO(4)))\subset V\}.$$
Let $(f_{i,1},...,f_{i,c}),i=1,...,dim\ m_2^{-1}(V)$(resp.$dim\ m_4^{-1}(V)$) be a basis of $m_2^{-1}(V)$(resp.\-$m_4^{-1}(V)$).

Then either of the follows is true:
\begin{itemize}
\item There are $2$ (resp.\-$4$) points $p_1,p_2 \in \mathbb{P}^1$ (resp. $p_1,p_2,p_3,p_4 \in \mathbb{P}^1$) such that $$rank\{f_{i,j}(p_i)\}<c$$ ($p_i$'s might coincide with each other, resulting in even lower codimension of $m_{*}^{-1}(V)$).
\item $m_2^{-1}(V)$ (resp. $m_4^{-1}(V)$is of codimension $3$(resp.\-$5$) in $\bigoplus_{i=1,...c} H^0(\PP^1,\OO(2d_i-2))$(resp.\-$\bigoplus_{i=1,...,c} H^0(\PP^1,\OO(2d_i-4))$).
\end{itemize}
\end{lem}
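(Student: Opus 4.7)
The proof follows the template of Lemma \ref{count1}: the codimension of $m_k^{-1}(V)$ is measured by the rank of an explicit linear map out of $H^0(\PP^1, \OO(k))$, and when the rank drops we extract a "bad" degree-$k$ element of $H^0(\PP^1, \OO(k))$ whose $k$ roots on $\PP^1$ govern the rank-drop of the evaluation matrix.

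Let $\ell$ be the linear functional on $\bigoplus_i H^0(\PP^1, \OO(2d_i))$ defining $V$. Then $m_k^{-1}(V)$ is the joint kernel over $f$ of the $k+1$ conditions $\ell(x^j \cdot f) = 0$, $0 \leq j \leq k$; equivalently, it is the kernel of
\[
\Phi_k : H^0(\PP^1, \OO(k)) \longrightarrow \bigl( \bigoplus_i H^0(\PP^1, \OO(2d_i - k)) \bigr)^*, \qquad g \longmapsto \bigl( f \mapsto \ell(g \cdot f) \bigr).
\]
If $\operatorname{rank} \Phi_k = k+1$, the codimension of $m_k^{-1}(V)$ in the source is $k+1$, giving the second bullet ($3$ for $m_2$, $5$ for $m_4$).

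Otherwise pick any nonzero $g_0 \in \ker \Phi_k$ and write $\ell = (\ell_1, \ldots, \ell_c)$. The vanishing $\Phi_k(g_0) = 0$ says each $\ell_i$ vanishes on $g_0 \cdot H^0(\PP^1, \OO(2d_i - k))$, and hence factors through the $k$-dimensional quotient $H^0(\PP^1, \OO(2d_i)) / g_0 \cdot H^0(\PP^1, \OO(2d_i - k))$. Factoring $g_0$ on $\PP^1$ as a product of $k$ linear forms produces points $p_1, \ldots, p_k \in \PP^1$ (with multiplicity), and in the reduced case the quotient identifies with $\CC^k$ via evaluation at the $p_l$'s, so $\ell_i = \sum_l \alpha_{i, l} \operatorname{ev}_{p_l}$ for some scalars.

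For each distinct $p_l$, substituting $g = \prod_{l' \neq l}(x - p_{l'})$ into $\ell(g \cdot f) = 0$ for $f = (f_1, \ldots, f_c) \in m_k^{-1}(V)$ kills all terms with $l' \neq l$ and yields
\[
g(p_l) \sum_{i=1}^c \alpha_{i, l} f_i(p_l) = 0,
\]
so $\sum_i \alpha_{i, l} f_i(p_l) = 0$. Applying this to every basis element of $m_k^{-1}(V)$ forces the rows of the evaluation matrix at $p_l$ to lie in a proper hyperplane of $\CC^c$, hence its rank is strictly less than $c$, producing the $k$ points of the first bullet. Coincident $p_l$'s introduce derivative/jet conditions which only lower the codimension further, matching the parenthetical in the statement; this non-reduced bookkeeping (and the degenerate case when some $(\alpha_{i,l})_i$ vanishes, which effectively reduces to a smaller divisor) is the main subtlety. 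Otherwise the argument is pure linear algebra combined with the fact that every section of $\OO(k)$ on $\PP^1$ factors into $k$ linear forms.
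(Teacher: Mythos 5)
Your proof is correct and is essentially the paper's argument in coordinate-free form: the paper writes the $k+1$ defining equations of $m_k^{-1}(V)$ in monomial coordinates and observes that a linear dependence among them forces the coefficients of $V$ to satisfy an order-$k$ linear recurrence whose characteristic roots are your $p_1,\dots,p_k$ --- solving that recurrence is exactly your decomposition $\ell_i=\sum_l \alpha_{i,l}\,\mathrm{ev}_{p_l}$, and your kernel element $g_0$ is the characteristic polynomial of that recurrence. Both treatments leave the coincident-root and vanishing-$(\alpha_{i,l})_i$ degenerations at the level of a parenthetical remark, so your write-up is no less complete than the paper's (and your explicit test-form step $g=\prod_{l'\neq l}(x-p_{l'})$ actually fills in what the paper dismisses with ``everything would be clear'').
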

\begin{proof}
We only prove the $m_2$ part here: $H^0(\mathbb{P}^1, \OO(2d))$ can be identified with all polynomials spanned by $\{1,x,x^2,...,x^{2d}\}$.
Let $x_1,...,x_c$ be generators for each $H^0(\mathbb{P}^1,\OO(2d_i))$, and $H^0(\mathbb{P}^1,\OO(2))$ spanned by $\{1,x,x^2\}$.

Then $m_2$ is simply defined by: $$m_2(x^i,x_1^{i_1},...,x_c^{i_c})=(x_1^{i_1+i},...,x_c^{i_c+i}).$$ Denote the coordinate of $x_i^{j}$ as $x_{i,j}$, and
let $V$ be defined by equation: $$\sum_{1\leq i \leq 2d_1}a_{1,i}x_{1,i}+\sum_{1\leq i \leq 2d_2}a_{2,i}x_{2,i}...+\sum_{1\leq i \leq 2d_c}a_{c,i}x_{c,i}=0.$$
Then $m_2^{-1}(V)$ is defined by:
$$\sum_{1\leq i \leq 2d_1-2}a_{1,i}x_{1,i}+\sum_{1\leq i \leq 2d_2-2}a_{2,i}x_{2,i}...+\sum_{1\leq i \leq 2d_c-2}a_{c,i}x_{c,i}=0$$  $$\sum_{1\leq i \leq 2d_1-2}a_{1,i}x_{1,i+1}+\sum_{1\leq i \leq 2d_2-2}a_{2,i}x_{2,i+1}...+\sum_{1\leq i \leq 2d_c-2}a_{c,i} x_{c,i+1}=0$$ and $$\sum_{1\leq i \leq 2d_1-2}a_{1,i}x_{1,i+2}+\sum_{1\leq i \leq 2d_2-2}a_{2,i}x_{2,i+2}...+\sum_{1\leq i \leq 2d_c-2}a_{c,i} x_{c,i+2}=0.$$
So it will be of codimension $3$ unless there is $p_1=[s_1,t_1], p_2=[s_2,t_2]\in \mathbb{P}^1$ such that $$a_{i,j+2}=(s_1/t_1+s_2/t_2)\cdot a_{i,j+1}+s_1/t_1 \cdot s_2/t_2\cdot a_{i,j}$$ then everything would be clear after solving this recurrence relation.

\end{proof}

\subsubsection{Union of two lines case}
\begin{lem}\label{count3}
let $D$ be union of two $\mathbb{P}^1$'s glued at a point with nodal singularity, $\OO(d)$ be the line bundle gluing from $\OO(d)$'s of two pieces.

Let ${m'}_1$ (resp.\- ${m'}_2$) be the multiplication map
$$H^0(D, \OO(1))\times (\bigoplus_{i=1,...,c} H^0(D,\OO(d_i-1)))\to \bigoplus_{i=1,...,c} H^0(D,\OO(d_i))  $$ resp.\-$$H^0(D, \OO(2))\times (\bigoplus_{i=1,...,c} H^0(D,\OO(d_i-2)))\to \bigoplus_{i=1,...,c} H^0(D,\OO(d_i)).$$
 For codimension $1$ linear subspace $$V \subset \bigoplus_{i=1,...,c} H^0(D,\OO(d_i) ),$$ let $${m'}_1^{-1}(V) :=\{ f \in \bigoplus_{i=1,...,c} H^0(D,\OO(d_i-1)), f \cdot H^0(D,\OO(1)))\subset V \}$$ resp.\-$${m'}_2^{-1}(V) :=\{ f \in \bigoplus_{i=1,...,c} H^0(D,\OO(d_i-2)), f \cdot H^0(D,\OO(2)))\subset V \}.$$
Let $(f_{i,1},...,f_{i,c}),i=1,...,dim\ {m'}_1^{-1}(V)$ (resp.\-$dim\ {m'}_2^{-1}(V)$) be a basis of ${m'}_1^{-1}(V)$(resp.\-${m'}_2^{-1}(V)$).

Then either of the followings is true:
\begin{itemize}
\item There are $2$(resp.\-$4$) points $p_1,p_2 \in D$ (resp.\- $p_1,p_2,p_3,p_4 \in D$ )such that $$rank\{f_{i,j}(p_i)\}<c$$.($p_i$'s might coincide with each other, resulting in even lower codimension of ${m'}_{*}^{-1}(V)$).
\item ${m'}_1^{-1}(V)$ (resp.\-${m'}_2^{-1}(V)$) is of codimension $3$(resp.\-$5$) in $\bigoplus_{i=1,...,c} H^0(D,\OO(d_i-1))$(resp.\-$\bigoplus_{i=1,...,c} H^0(D,\OO(d_i-2))$).
\end{itemize}
\end{lem}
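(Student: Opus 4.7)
\emph{Proof proposal.} The strategy is to adapt the method of Lemmas \ref{count1} and \ref{count2} to the nodal curve $D = D_1 \cup D_2$ with node $q$. Fix coordinates $x$ on $D_1$ and $y$ on $D_2$ with $q = \{x = y = 0\}$. Then $H^0(D, \OO(d))$ is $(2d+1)$-dimensional with canonical basis $\{1\} \cup \{x^j : 1 \leq j \leq d\} \cup \{y^k : 1 \leq k \leq d\}$, where $1$ is the constant section valued $1$ on both components and each $x^j$ (resp.\ $y^k$, for $j, k \geq 1$) vanishes identically on the opposite component. A crucial feature for what follows is that $x^j \cdot y^k = 0$ in $H^0(D, \OO(j+k))$ for all $j, k \geq 1$, since these sections have disjoint support.

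Write $V = \ker \phi$ for a nonzero linear functional $\phi$ on $\bigoplus_i H^0(D, \OO(d_i))$. Then ${m'}_1^{-1}(V)$ is cut out in $\bigoplus_i H^0(D, \OO(d_i - 1))$ by the three linear conditions $\phi(f) = \phi(f \cdot x) = \phi(f \cdot y) = 0$, one per basis element of $H^0(D, \OO(1))$. Expanded in the coordinate basis these mirror the explicit expressions in Lemma \ref{count1}, with the structural observation that $\phi(f \cdot x)$ involves only the $D_1$-coefficients of $f$ and $\phi(f \cdot y)$ only the $D_2$-coefficients. Hence ${m'}_1^{-1}(V)$ has codimension exactly $3$ unless these three forms admit a nontrivial linear relation, i.e.\ unless $\phi(f \cdot s) = 0$ for all $f$ for some nonzero $s = A + Bx + Cy \in H^0(D, \OO(1))$.

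In that case, $s$ has a zero divisor $Z(s)$ of length $2$ on $D$. From the exact sequence $0 \to \OO(d-1) \xrightarrow{\cdot s} \OO(d) \to \OO(d)|_{Z(s)} \to 0$, the image of multiplication by $s$ is precisely the subspace of sections vanishing on $Z(s)$, so $\phi$ descends to a nonzero functional on $\bigoplus_i H^0(D, \OO(d_i)|_{Z(s)}) \cong \CC^{2c}$. In the generic sub-case $Z(s) = p_1 + p_2$ with $p_1 \in D_1$, $p_2 \in D_2$ distinct smooth points, this descent has the form $\phi(g) = \sum_j \bigl(\lambda_j g^{(j)}(p_1) + \mu_j g^{(j)}(p_2)\bigr)$; substituting the basis $f_i \in {m'}_1^{-1}(V)$ into the three equations and using $x \cdot y = 0$ to separate $D_1$- and $D_2$-contributions, they reduce to $\sum_j \lambda_j f_{i, j}(p_1) = 0$ and $\sum_j \mu_j f_{i, j}(p_2) = 0$ for every $i$; since $(\lambda, \mu) \neq 0$, the matrix $\{f_{i, j}(p_k)\}$ has rank strictly less than $c$.

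The ${m'}_2$-statement is proved analogously with $H^0(D, \OO(2))$ (dimension $5$, basis $\{1, x, x^2, y, y^2\}$) replacing $H^0(D, \OO(1))$; the codimension drops from $5$ exactly when some nonzero $s \in H^0(D, \OO(2))$ with a length-$4$ zero divisor annihilates $\phi \circ (\cdot s)$, yielding four points $p_1, \ldots, p_4$. The main technical obstacle I anticipate is the handling of the degenerate sub-cases where $Z(s)$ meets or concentrates at the node, or where $s$ vanishes identically along an entire component; in these situations one has to redo the above computation locally at $\OO_{D, q} \cong k[[x, y]]/(xy)$, allowing the ``evaluation at $p_k$'' to be replaced by the appropriate first-order jet data when points coincide at the node, after which the same separation-of-coefficients argument still forces the relevant matrix to drop rank.
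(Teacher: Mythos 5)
Your proposal is correct and follows essentially the same route as the paper, whose entire proof of this lemma is the one-line remark that sections of $\OO(d)$ on $D$ are obtained by gluing sections on the two components and that the argument of Lemma \ref{count1} then applies; you have simply carried out that gluing argument explicitly (the basis $\{1, x^j, y^k\}$, the vanishing $x^j\cdot y^k=0$, and the dependence of the cutting functionals being governed by a section $s\in H^0(D,\OO(1))$ or $H^0(D,\OO(2))$ whose zero divisor produces the points $p_k$). The degenerate sub-cases you flag at the end (zeros at the node, $s$ vanishing on a whole component) are likewise not treated in the paper, so your write-up is if anything more careful than the original.
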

\begin{proof}
The sections here come from glueing each section from two lines, and the argument in Lemma \ref{count1} applies.
\end{proof}

\subsubsection{Double of a line case}
\begin{lem}\label{count4}
let $L$ be double $\mathbb{P}^1$ embedded in $\mathbb{P}^2$ as double of a line, $\OO(d)$ be the line bundle restricting from $\OO(d)$ of $\mathbb{P}^2$.

Let ${m''}_1$ (resp.\- ${m''}_2$) be the multiplication map
$$H^0(L, \OO(1))\times (\bigoplus_{i=1,...,c} H^0(L,\OO(d_i-1))) \to \bigoplus_{i=1,...,c} H^0(L,\OO(d_i) )$$ resp.\-$$ H^0(L, \OO(2))\times (\bigoplus_{i=1,...,c} H^0(L,\OO(d_i-2)))\to \bigoplus_{i=1,...,c}H^0(L,\OO(d_i)).$$
For codimension $1$ linear subspace $$V \subset \bigoplus_{i=1,...,c} H^0(L,\OO(d_i) ),$$ let $${m''}_1^{-1}(V) :=\{ f \in \bigoplus_{i=1,...,c} H^0(L,\OO(d_i-1)), f \cdot H^0(L,\OO(1)))\subset V \}$$ resp.\-$${m''}_2^{-1}(V) :=\{ f \in \bigoplus_{i=1,...,c} H^0(L,\OO(d_i-2)), f \cdot H^0(L,\OO(2)))\subset V \}.$$
Let $(f_{i,1},...,f_{i,c}),i=1,...,dim\ {m''}_1^{-1}(V)$ (resp.\-$dim\ {m''}_2^{-1}(V)$) be a basis of ${m''}_1^{-1}(V)$(resp.\-${m''}_2^{-1}(V)$).

Then either of the follows is true:
\begin{itemize}
\item There are $1$(resp.\-$2$) point $p_1 \in \mathbb{P}^1$ (resp.\- $p_1,p_2 \in \mathbb{P}^1$ )such that $$rank\{f_{i,j}(p_i)\}<c$$.($p_i$'s might coincide with each other, resulting in even lower codimension of ${m''}_{*}^{-1}(V)$).
\item ${m''}_1^{-1}(V)$ (resp.\-${m''}_2^{-1}(V)$) is of codimension $3$(resp.\-$5$) in $\bigoplus_{i=1,...,c} H^0(L,\OO(d_i-1))$(resp.\-$\bigoplus_{i=1,...,c} H^0(L,\OO(d_i-2)) $).
\end{itemize}
\end{lem}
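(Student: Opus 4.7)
The plan is to mimic the explicit coordinate computation of Lemma \ref{count1}, adapted to the non-reduced structure of the double line. Write $L$ in an affine chart as $\SP \CC[x,y]/(y^2)$, covered by two such patches glued by $x \mapsto 1/x$; sections of $\OO_L(d)$ then take the form $f_0(x) + y\, f_1(x)$ with $\deg f_0 \leq d$ and $\deg f_1 \leq d-1$, so $\dim H^0(L,\OO(d)) = 2d+1$. In particular $H^0(L,\OO(1))$ has basis $\{1,x,y\}$ and $H^0(L,\OO(2))$ has basis $\{1,x,x^2,y,xy\}$, so the codimension of ${m''}_1^{-1}(V)$ (resp.\ ${m''}_2^{-1}(V)$) in the source is a priori at most $3$ (resp.\ $5$), matching the asserted bound.

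Using the product rule $(g_0 + y g_1)(f_0 + y f_1) = g_0 f_0 + y(g_0 f_1 + g_1 f_0)$ in the local ring, I would write down the defining linear equations of ${m''}_k^{-1}(V)$ exactly as in the proof of Lemma \ref{count1}. Pick dual coordinates $(A^{(k)}_{0,i}, A^{(k)}_{1,j})$ for the functional $\ell$ cutting out $V \subset \bigoplus_k H^0(L,\OO(d_k))$; then a section $f = (f^{(k)}_0 + y f^{(k)}_1)_k$ lies in ${m''}_1^{-1}(V)$ precisely when $\ell(f) = \ell(xf) = \ell(yf) = 0$. For ${m''}_2^{-1}(V)$ five such equations appear, corresponding to multiplication by $\{1,x,x^2,y,xy\}$.

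The key step is to show that when these linear equations fail to be independent, the claimed point(s) on $L$ must exist. Dependence yields a nontrivial triple $(\alpha,\beta,\gamma)$ (resp.\ quintuple) such that the dual coefficient vectors satisfy a coupled system of recurrences: the $A^{(k)}_1$ part obeys the same one-step geometric recurrence as in Lemma \ref{count1}, while the $A^{(k)}_0$ part obeys the analogous recurrence with an inhomogeneous term driven by the $A^{(k)}_1$'s through the $y$-coupling. The homogeneous part has characteristic polynomial of degree $1$ for ${m''}_1$ (resp.\ degree $2$ for ${m''}_2$), pinning down one (resp.\ two) reduced closed points $p_1$ on $L$. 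After normalizing $p_1 = [0:1]$ as in Lemma \ref{count1}, the defining equations of ${m''}_k^{-1}(V)$ collapse and the residual equation produces a nontrivial linear relation among the columns of $\{f_{i,j}(p_1)\}$, forcing rank strictly less than $c$.

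The main obstacle is the coupling between the reduced and nilpotent parts of sections, which has no analogue in Lemmas \ref{count1}--\ref{count3}. In contrast with the nodal case of Lemma \ref{count3}, where each of the two reduced components contributes its own independent recurrence and thereby doubles the number of bad points, here the $y$-direction only feeds one family of conditions into the other without creating a second independent characteristic root; this explains why the bad-point count stays at $1$ and $2$ rather than $2$ and $4$. Checking carefully that the inhomogeneous term in the $A^{(k)}_0$ recurrence introduces no additional solutions beyond those already captured by $p_1$ (resp.\ $p_1,p_2$) is the delicate part, after which the argument concludes exactly as in Lemma \ref{count1}.
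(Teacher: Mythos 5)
Your proposal is correct and follows essentially the same route as the paper: the paper's own proof consists precisely of the identification $H^0(L,\OO(d))\cong H^0(\PP^1,\OO(d))\oplus t\cdot H^0(\PP^1,\OO(d-1))$ with $t^2=0$ and the product rule you write down, followed by the statement that the rest follows from the argument of Lemma \ref{count1}. Your spelled-out coupled recurrence (homogeneous on the nilpotent part, inhomogeneous on the reduced part, with characteristic polynomial of degree $1$, resp.\ $2$, accounting for the single, resp.\ two, bad points) is exactly the intended elaboration of that one-line deferral.
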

\begin{proof}
In this case we have $H^0(L,\OO(d))\cong H^0(\mathbb{P}^1, \OO(d))\oplus t\cdot H^0(\mathbb{P}^1,\OO(d-1))$ where $t$ is a formal variable with $t^2=0$, now the multiplication map $H^0(L,\OO(d_1))\times H^0(L,\OO(d_2) \to H^0(L,\OO(d_1+d_2))$ is simply sending $(s_1+ts_2, s_1'+ts_2')$ to $s_1s_1'+t(s_2s_1'+s_1s_2')$ and the rest follows from the argument in Lemma \ref{count1}.
\end{proof}

\section{proof}
\begin{thm}\label{fanoscheme1}
Let $X$ be a  complete intersection of type $d_1, \ldots, d_c$. Then:

 \begin{itemize}
 \item $F(X)$ is empty for general $X$ if $d_1+d_2+...+d_c+c > 2n-2$, smooth with dimension $2n-2-(d_1+d_2+...+d_c+c)$ for general $X$ if $d_1+d_2+...+d_c +c \leq 2n-2$. If $d_1+d_2+...+d_c +c \leq 2n-3$ (with the  only exceptional case as quadric surface), then $F(X)$ is always connected, thus is irreducible for general $X$.
 \item $C(X)$ is empty for general $X$ if $d_1+d_2+...+d_c+c/2> (3n-2)/2$, smooth with dimension $3n-1-2(d_1+d_2+...+d_c)-c$ for general $X$ if $d_1+d_2+...+d_c+c/2 \leq (3n-2)/2$,  always connected if $d_1+d_2+...+d_c+c/2 \leq (3n-3)/2$ ($n\leq 3$ case is trivial), in particular $\overline{M}_{0,0}(X,2)$ is connected.
 \end{itemize}
\end{thm}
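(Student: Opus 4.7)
The proof follows the template of the hypersurface case in \cite{kollarbook}, with the counting lemmas of the previous section replacing the simpler codimension estimates that suffice there. Let
\[
P = \prod_{i=1}^c \PP(H^0(\PP^n, \OO(d_i)))
\]
parameterize (tuples of defining equations of) complete intersections of type $(d_1,\ldots,d_c)$; it is smooth and simply connected. Form the incidence correspondences
\[
\mcI_F \subset P \times F(\PP^n), \qquad \mcI_C \subset P \times C(\PP^n),
\]
of pairs $(X, L)$ with $L \subset X$ and $(X,[Z])$ with $Z \subset X$. The second projection has fiber over $[Z]$ equal to $\prod_i \PP(\ker(H^0(\PP^n,\OO(d_i)) \to H^0(Z,\OO_Z(d_i))))$, a product of projective spaces of constant dimension, since $h^0(Z,\OO_Z(d))=d+1$ for a line and $h^0(Z,\OO_Z(d))=2d+1$ uniformly across the three conic types. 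Hence $\mcI_F$ and $\mcI_C$ are projective bundles over the smooth irreducible bases $F(\PP^n) = \mathrm{Grass}(2,n+1)$ and $C(\PP^n)$ (the latter by Exercise \ref{hilb}), so both are themselves smooth and irreducible of a dimension one reads off from the fibration.

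\textbf{Emptiness and generic smoothness.} The generic fiber of $\pi \colon \mcI_\bullet \to P$ has the predicted dimension $2n-2-(d_1+\cdots+d_c+c)$ for lines and $3n-1-2(d_1+\cdots+d_c)-c$ for conics. When this number is negative $\pi$ cannot dominate $P$, which gives emptiness for general $X$; when it is nonnegative, generic smoothness of a dominant morphism between smooth varieties together with Bertini gives a smooth generic fiber of the expected dimension.

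\textbf{Connectedness.} We apply Lemma \ref{connectedness2} to $\pi \colon \mcI_\bullet \to P$, so it suffices to prove $\mathrm{codim}_{\mcI_\bullet}(S(\pi)) \geq 2$. At $(X,[Z])$ the map $\pi$ fails to be smooth exactly when the defining sections of $X$ span only a proper codimension-one subspace $V$ of $\bigoplus_i H^0(Z,\OO_Z(d_i))$---precisely the setup of Lemmas \ref{count1}--\ref{count4}. Stratify $S(\pi)$ by the isomorphism type of $Z$ (smooth conic, nodal pair of lines, double line) and by $V$. For each stratum the dichotomy in the relevant counting lemma produces one of two possibilities: either $m_\bullet^{-1}(V)$ has the stated large codimension ($3$ or $5$), which cuts down the fiber of $\pi$ at $(X,[Z])$ by enough to give codim $\geq 2$ in $\mcI_\bullet$; or there is an ordered collection of rank-drop points on $Z$. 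In the second case one bounds the codimension of the locus of triples $(X,[Z],\{p_i\})$ by using that each constraining point imposes roughly $n-c$ conditions after accounting for the freedom to move a marked point on $Z$, and the sharp inequalities in the theorem guarantee codim $\geq 2$. Lemma \ref{connectedness2} then yields connected fibers for $\pi$, and irreducibility of the generic fiber follows from connectedness plus generic smoothness. The assertion for $\overline{M}_{0,0}(X,2)$ is deduced from the surjective morphism $\overline{M}_{0,0}(X,2) \to C(X)$ with connected fibers.

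\textbf{Main obstacle.} The delicate step is the ``bad points'' alternative for conics, where the codimension bookkeeping must be done separately on each of the three strata of $C(X)$. The non-reduced stratum is the most subtle: the section decomposition
\[
H^0(L,\OO(d)) \cong H^0(\PP^1,\OO(d)) \oplus t\cdot H^0(\PP^1,\OO(d-1))
\]
with $t^2=0$ forces the counting in Lemma \ref{count4} to track the two components simultaneously, and matching up these contributions with those on the smooth and nodal strata is what pins down the sharp bound $(3n-3)/2$ for connectedness rather than a strictly weaker inequality.
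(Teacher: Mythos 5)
Your proposal follows the same route as the paper: the same incidence correspondence, realized as a projective bundle over the smooth Hilbert scheme of Exercise \ref{hilb}, the same dichotomy from Lemmas \ref{count1}--\ref{count4} (large codimension of $m_{\bullet}^{-1}(V)$ versus rank-drop points, the latter absorbed into the locus where $X$ is singular along the conic), and the same appeal to Lemma \ref{connectedness2} for connectedness. The only divergence is minor: you invoke characteristic-zero generic smoothness for the smoothness statement, whereas the paper deduces it (characteristic-free, and with dominance of $p_2$ coming along for free) from the fact that the codimension of $S(\mcI)$ exceeds the dimension of the general fiber.
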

\begin{proof}
For line case, the proof is almost word to word translation of the arguments in Section $V.4$ of \cite{kollarbook}, with the Lemma $4.3.11$ used there replaced by Lemma \ref{count1} above.

For conic case, consider the incidence correspondence $$\mcI \subset Hilb_{2t+1}(\mathbb{P}^n) \times H:=H_{n,d_1} \times H_{n,d_2} \times ...\times H_{n,d_c}$$ where $H$ parametrizes all complete intersections of degree $(d_1,...,d_c)$ and $\mcI$ is defined by all hyper-surfaces defining the complete intersections that vanish at the conic $[C] \in Hilb_{2t+1}$.

We have two projections $$\mcI \stackrel{p_1}{\longrightarrow}  Hilb_{2t+1}(\mathbb{P}^n), \mcI \stackrel{p_2}{\longrightarrow}  H.$$
One can see that $p_1$ has smooth fibers as product of linear spaces in $H_{n,d_i}$'s. So by smoothness of $Hilb_{2t+1}(\mathbb{P}^n)$, we have that $\mcI$ is smooth.

Now one has to show that $p_2$ has connected fibers:
For any $C\subset X$ in $\mcI$ let $C$ be defined by $$x_0=...=x_{n-3}=L_2(x_{n-2},x_{n-1},x_{n})=0$$ where $L_2$ is a possibly reducible degree $2$ polynomial in the plane $x_0=...=x_{n-3}=0$ and
let $X$ be defined by $f_1=...=f_c=0$. Then $C\subset X$ implies that $$f_i=f_{i,0}x_0+f_{i,1}x_1+...+f_{i,n-3}x_{n-3}+f_{i,n-2}L_2.$$
Let $Z(\mcI)\subset \mcI$ be the set where $rank\{f_{i,j}\}(p)<c$ for some point $p\in C$, where $j=0,1,...,n-3$. It is easy to see that these two conditions are independent of the $x_0,...,x_{n-3}, L_2$ we choose, and $$codim(Z(\mcI),\mcI) \geq 2$$ if $n\geq 4$ and  $d_1+d_2+...+d_c+c/2\leq (3n-1)/2$.($n\leq 3$ case is trivial).

Let $S(\mcI) \subset \mcI$ be the locus where $p_2$ is not smooth along $C$.

In the locus $\mcI-Z(\mcI)$ where $X$ is smooth at all points of $C$, consider the normal bundle sequence:

$$0 \to N_{C|X} \to \OO_{C}(1)^{n-2} \oplus \OO_{C}(2) \stackrel{ \{ f_{i,j} \} }{\longrightarrow}\bigoplus_{i=1,...,c} \OO_{C}(d_i) \to 0$$

Then $Z(\mcI)-S(\mcI)$ is contained in the locus where $$H^1(C,N_{C|X})=0$$ which is equivalent to $$H^0(C,\OO(d_i))=\sum_{j}f_{i,j}H^0(C,\OO(1))+f_{i,n-2}H^0(C,\OO(2)).$$

So $S(\mcI)$ is cut out by $$\cup_{V} \{C \subset X |\sum_{i,j}f_{i,j}H^0(C,\OO(1))+\sum_{i} f_{i,n-2}H^0(C,\OO(2)) \subset V\}$$ where $V$ varies through all possible codimension $1$ linear subspaces of  $$\bigoplus_{i=1,...,c} H^0(C,\OO(d_i)).$$ Now Lemma \ref{count2}, Lemma \ref{count3} and Lemma \ref{count4} show that $$codim(S(\mcI),X - Z(\mcI))\geq 3n-1-2(d_1+...+d_c)-c.$$
And the general fiber of $p_2$ has dimension $3n-2-2(d_1+...+d_c)-c$, so the general smoothness follows.

When $$3n-1-2(d_1+...+d_c)-c\geq 2,$$ namely $d_1+...+d_c+c \leq (3n-3)/2$, we have $$codim( S(\mcI), \mcI-Z(\mcI)) \geq 3n-1-2(d_1+...+d_c)-c \geq 2$$ and $$codim(Z(\mcI),\mcI) \geq 2,$$
which implies the connectedness of fibers of $p_2$ by Lemma \ref{connectedness2}.

\end{proof}


\end{document}